\documentclass{ifacconf}

\usepackage{color}
\usepackage{mathrsfs,amssymb,natbib}
\usepackage{graphicx}      
\usepackage{natbib}
\usepackage{picins}
\usepackage{booktabs}
\usepackage{color}
\usepackage{graphicx}  
\usepackage{subcaption}
\usepackage[cmex10]{amsmath}
\usepackage{mathrsfs,amssymb,natbib}
\usepackage{float}
\usepackage{adjustbox}
\usepackage{array}
\usepackage{cases}
\usepackage{wrapfig}
\usepackage{graphicx,picinpar}
\usepackage{graphicx}
\usepackage{nccbbb}
\usepackage{enumerate}
\newtheorem{lemma}{Lemma}[section]
\usepackage{titlesec}
\titlespacing*{\section}{0pt}{*0.4}{*0.4}
\titlespacing*{\subsection}{0pt}{*0.3}{*0.3}

\usepackage{epstopdf}

\begin{document}
\begin{frontmatter}

\title{
 Boundary Stabilization for  the Rayleigh Beam System  under Event-triggered Controls
 }


\author[First]{Siwen Wang}
\author[First]{Yi Cheng}
\author[Second]{Yuhu Wu}
\author[Three]{Yongxin Wu}
\author[Four]{Wen Kang}
\address[First]{Department of Mathematics, Bohai University,
   Jinzhou, 121013, China (e-mail: wangsiwen011107@163.com;chengyi407@bhu.edu.cn)}
\address[Second]{School of Control Science and Engineering, Dalian University of Technology,
   Dalian, 116024, China (e-mail: wuyuhu@dlut.edu.cn)}
\address[Three]{Universit\'e Marie et Louis Pasteur, SUPMICROTECH, CNRS, Institut FEMTO-ST, Besan\c{c}on 25000, France (e-mail: yongxin.wu@femto-st.fr)}
\address[Four]{School of Mathematics and Statistics, Beijing Institute of Technology,
	Beijing, 100083, China (e-mail: kangwen@bit.edu.cn)}
	\thanks{This work was partially supported by the National Natural Science Foundation of China under grant nos.U24A20263, 62173062, 12131008, and U23B2033; National Key R\&D Program of China under grant no.2024YFA1013101, and the Natural Science Foundation of Liaoning Province of China under grant no.2025-MS-278. }
\begin{abstract}                 
In this paper, we propose two event-triggered control laws incorporating an event-triggering mechanism to tackle the boundary stabilization for the Rayleigh beam system.
Under this event-triggered controls, a sufficient condition for parameter determination is constructed to guarantee the exponential stability of the closed-loop system by using the integral multiplier technique and energy perturbation method, wherein the desired exponential decay rate can be precisely determined. Numerical examples are presented to demonstrate the efficacy of the event-triggered control methodology.
\end{abstract}

\begin{keyword}
Rayleigh beam; Event-triggered control; Exponential stability.
\end{keyword}
\end{frontmatter}

\section{Introduction}
Event-triggered control has attracted much attention owing to the advantage of saving communication resources. The key advantage of event-triggered control resides in its ``on-demand action'' mechanism, which called event-triggering mechanism. The idea of this mechanism is to decide the timing of control updates by comparing state errors with system energy, which realizing efficient resource utilization. Applications of event-triggered control are concentrated on hyperbolic equations \citep{Espitia2016Event, Diagne2021Boundary, Koudohode2022Event, Wang2022hyperbolic, Baudouin2019, Wang2025traffic,Zhang2025,Rathnayake2025Global}, parabolic equations \citep{Selivanov2016Distributed, Espitia2021Event, Rathnayake2025Performance}, parabolic partial differential equation-ordinary differential equation (PDE-ODE) cascades \citep{Wang2025traffic} and hyperbolic PDE-ODE cascades \citep{Wang2022adaptive}. Additionally, event-triggered
control strategies for other types of PDEs have been
explored, such as linearized FitzHugh-Nagumo equation \citep{Teresa2025Event} and the nonlinear Korteweg-de Vries equation \citep{Kang2021Event}.

Flexible structures, owing to their inherent advantages, have found widespread applications in engineering, where their vibrational behavior was often  modeled as beam equations \citep{Krstic2008,Cheng2022,Cheng2024,Cheng2025,Cheng2026}. As an efficient control method, event-triggered control has been adopted to address the stabilization problem of flexible beam systems \citep{Wu2024robust, Fan2025Event}. These investigates mainly apply the event-triggered control inside the system to suppress the system's vibrations, while few studies involve designing event-triggered control at the boundary of the beam. Motivated by this, we design two event-triggered control strategies implemented on the boundary to suppress the vibration of the following Rayleigh beam system
\begin{equation}\label{2021}
w_{tt}(x,t)+w_{xxxx}(x,t)-w_{xxtt}(x,t)=0,
\end{equation}
for all $\!x\!\in\![0,1]$ and $\!t\!>\!0$, where $w(x,t)$ represents the transversal deflection of beam at the position $x$ and time $t$ and $(\cdot)_{tt}=\frac{\partial^{2}(\cdot)}{\partial t^{2}}$,~$(\cdot)_{xt}=\frac{\partial(\cdot)}{\partial x\partial t}$. Assume that the left end of the beam is fixed, while
boundary control is applied at the right end. In this paper, the system (\ref{2021}) is equipped with the following boundary conditions:
\begin{equation}\label{uk}
\left\{\begin{array}{l}
     w_{xx}(1,t)=\mathbb{U}_{1}(t),\\
     w_{xtt}(1,t)-w_{xxx}(1,t)=\mathbb{U}_{2}(t),\\
     w(0,t)=w_{x}(0,t)=0,
\end{array}\right.
\end{equation}
for all $t>0$ and $\mathbb{U}_{i}(t)(i=1,2)$ denote the control input.

Notably, various types of control strategies have been proposed to address the boundary stabilization problem of Rayleigh beams, for instance, linear boundary controls \citep{wang2006stability}, dynamic boundary controls \citep{pavlovic2015dynamic,olotu2023free} and nonlinear boundary controls \citep{Xu2025variable}.
Compared to other control approaches, event-triggered controls adopted in this paper offers the advantage of reducing information update times.

The rest of the presented paper is organized as follows. Section 2 focuses on the decay estimates of energy for the resulting closed-loop system. The proof of the main result is presented in Section 3. Section 4 provides several simulation results to verify the theoretical findings. Finally, a concise conclusion is given in Section 5.
\vspace{10pt}
\section{Main results}\label{Se2}
\vspace{10pt}
The symbol $L^2(0,1)$ denotes the Hilbert space of all square-integrable functions defined on $[0,1]$, equipped with the norm
\begin{equation}
\|w\|=\bigg(\int_0^1w^2(x)dx\bigg)^{\frac{1}{2}}.
\end{equation}

In this paper, the event-triggered feedback laws in (\ref{uk}) are given by
\begin{eqnarray}\label{2022}
\mathbb{U}_{1}(t)=-K_1w_{xt}(1,t_k),~\mathbb{U}_{2}(t)=-K_2w_t(1,t_k),
\end{eqnarray}
 for any $t\in[t_k,t_{k+1}]$, where $K_1$ and $K_2$ are positive parameters, $w_{t}(1,t_k)$ and $w_{xt}(1,t_k)$ denote the transverse velocity and angular velocity of the right boundary of beam at time  $t_k$ ($k \in \mathbb{N}$), respectively.
Note that $t_k$ ($k \in \mathbb{N}$) are the triggering instants satisfying $0 = t_0 < t_1 < \cdots < t_k < t_{k+1}\cdots$.
The execution times are triggered by events generated according to a rule based on the system's energy, called the event-triggering mechanism, defined as:
\begin{equation}\label{tk}
\begin{aligned}
t_{k+1}:= \inf \Big\{ t \geq t_k :\max\{e^2_k,\hat{e}^2_k\}\!\geq\!\beta E(t)\!+\!\beta_{0}E(0)e^{-2\theta t} \Big\},
\end{aligned}
\end{equation}
where $e_k:=w_t(1,t)-w_t(1,t_k)$, $\hat{e}_k:=w_{xt}(1,t)-w_{xt}(1,t_k)$, for all $t\geq t_k(k\in\mathbb{N})$ and $\beta$, $\beta_{0}$ and $\theta$ are designed parameters to be determined later. The energy function $E(t)$ in (\ref{tk}) is defined as
\begin{eqnarray}\label{Et}
E(t):=\frac{1}{2}\int_0^1w^2_tdx+\frac{1}{2}\int_0^1w^2_{xx}dx+\frac{1}{2}\int_0^1w^2_{xt}dx.
\end{eqnarray}

\begin{rem}
In (\ref{Et}),
the first term  represents the kinetic
energy of the system, the second term
 corresponds to the elastic potential
energy due to bending, and the third term
 accounts for the kinetic energy
associated with the bending rate or higher-order inertial effects. 
\end{rem}
\begin{rem}
Event-triggered mechanisms involving the internal state error and the system energy function have been employed in \cite{Baudouin2019,Kang2021Event,Koudohode2022Event,Fan2025Event}. In addition, \cite{Espitia2016Event} and \cite{Gao2022Event} 
respectively designed boundary triggering control laws for  linear hyperbolic systems and flexible mechanical
systems to address system stability, but their triggering mechanisms rely on only one boundary control input.
 It should be noted that, the event-triggered mechanism $(\ref{tk})$,  based on two triggering control laws, is proposed for implementation at the boundary of the Rayleigh beam.
\end{rem}

\begin{rem}
 The exponential term $\beta_0E(0)e^{-2\theta t}$ in the triggering mechanism (\ref{tk}) ensures that the inter-execution time admits a positive lower bound over a finite time interval without the need to define an additional dwell time.
\end{rem}

For brevity, we employ the following abbreviated notations: $w := w(x,t)$, $w_{x} := w_{x}(x,t)$, $w_{t} :=w_t(x,t)$.
The closed-loop system resulting from (\ref{2021}) under the influence of feedback (\ref{2022}) is given by
\begin{numcases}{}
 w_{tt}+w_{xxxx}-w_{xxtt}=0, \label{eqsystem1}\\
     w_{xx}(1,t)=-K_1w_{xt}(1,t_k), \label{eqsystem2}\\
     w_{xtt}(1,t)-w_{xxx}(1,t)=-K_2w_t(1,t_k), \label{eqsystem3}\\
     w(0)=w_{x}(0)=0, \label{eqsystem4}\\
      w(x,t_k)=\iota_{k}(x), \; w_{t}(x,t_k)=\varsigma_{k}(x),\label{eqsystem5}
      \end{numcases}
where $\iota_k(x)$ and $\varsigma_k(x)$ represent the initial position and velocity at time $t_k$, respectively, for all $x \in [0,1]$ and $t \in [t_k,t_{k+1}]$ with $k \in \mathbb{N}$.


The main objective of this paper is to design an event-triggered mechanism (\ref{tk}), and prove that the closed-loop system (\ref{eqsystem1})-(\ref{eqsystem5}) is exponentially stable under this mechanism.
Naturally, the well-posedness of the system is equally crucial for the validity of the overall results. Due to page constraints, only the key ideas of the proof for the well-posedness result are outlined herein, which are structured into three aspects as follows:

(I) The existence of solutions to the closed-loop system on the interval $[0,t_1]$ is established by combining the linear semigroup method with the Lipschitz perturbation approach \citep{Pazy1983}.

(II) By virtue of induction method \citep{Kang2021Event}, it can be deduced that the closed-loop system admits a unique solution on each time interval $[t_k,t_{k+1}]$ for all $k\in\mathbb{N}$.

(III) Avoidance of Zeno's behavior.
The term $\beta_0 E(0) e^{-2\theta t}$ in (\ref{tk}) is an exponentially decaying positive term, where $\beta_0 > 0,~E(0)>0$ (the initial energy of the system) are positive constants, and $\theta > 0$ is a design parameter related to the decay rate. According to the event-triggering condition in (\ref{tk}), the triggering error $e_k(t_k)=\hat{e}_k(t_k)=0$, therefore within any finite time $T$, one has
\begin{eqnarray}
\begin{aligned}
&\max\{|e_k(t_{k+1})-e_k(t_k)|^2,|\hat{e}_k(t_{k+1})-\hat{e}_k(t_k)|^2\} \\
&\geq\beta_0 E(0) e^{-2\theta t_{k+1}}\geq \beta_0 E(0) e^{-2\theta T}.
\end{aligned}
\end{eqnarray}
Then, leveraging the regularity of solution (uniformly  continuity) and the proof by contradiction \citep{Kang2021Event}, the occurrence of Zeno behavior is effectively excluded.

Now, we state our stability result.

\begin{thm}\label{th3.1}
Given a desired decay rate $\delta > 0$ and $K_1,~K_2>0$, there exist positive parameters $\mu>\max\{ \frac{K^2_2-2\varepsilon\lambda^2}{4K_2-2\lambda-4\lambda \varepsilon}, \frac{K^2_1-\lambda^2\varepsilon-2\lambda^2K^2_1}{4K_1-2\lambda-2\lambda\varepsilon-4\lambda K^2_1},\frac{\lambda}{2}\}$ with $0<\lambda< \lambda^*= \min\{\frac{2K_2}{2\varepsilon+1},\frac{K_1}{\sqrt{2K^2_1+\varepsilon}}, \frac{1}{3}\}$ and $\varepsilon=\frac{2K_2(1+\alpha)+2K_1(1-\alpha)}{3-2\alpha}$ $ (\frac{1}{2}<\alpha\!<\!1)$  such that the closed-loop system under the event-triggering mechanism (\ref{tk}) is exponentially stable, satisfying the inequality:
\begin{equation}\label{9998}
E(t) \leq \mathcal{G} e^{-\delta t} E(0), \quad \forall t > 0,
\end{equation}
where $\mathcal{G} = \frac{1 + 3\lambda}{1 - 3\lambda} \left( 1 + \frac{\hat{\delta}}{2\theta - \delta} \right)$ and $\delta=\frac{\lambda C-2\mu\beta}{1-3\lambda},$ with $ C=\min\{2\alpha\!-\!1,\frac{3-2\alpha}{4}\},0 \leq \beta < \frac{\lambda C}{2\mu} ,\hat{\delta}=\frac{2\mu\beta_0}{1-3\lambda}, \beta_0 > 0$ and $\theta > \frac{\delta}{2}$.
\end{thm}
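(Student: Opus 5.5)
We follow the standard Lyapunov route via energy perturbation: establish an energy identity for $E(t)$, build a perturbed functional $L(t)=E(t)+\lambda\rho(t)$ with an integral multiplier $\rho$, and close the estimate using the triggering rule (\ref{tk}).

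\textbf{Step 1 (energy identity).} Differentiate (\ref{Et}) along (\ref{eqsystem1})--(\ref{eqsystem5}) and integrate by parts twice, using $w(0)=w_x(0)=0$, which kills the terms at $x=0$. We expect
\begin{equation*}
\dot E(t)=w_{xx}(1,t)w_{xt}(1,t)-w_t(1,t)\big(w_{xxx}(1,t)-w_{xtt}(1,t)\big).
\end{equation*}
On $[t_k,t_{k+1})$ we substitute $w_{xt}(1,t_k)=w_{xt}(1,t)-\hat e_k$ and $w_t(1,t_k)=w_t(1,t)-e_k$, which gives
\begin{equation*}
\dot E=-K_1w_{xt}^2(1,t)+K_1w_{xt}(1,t)\hat e_k-K_2w_t^2(1,t)+K_2w_t(1,t)e_k.
\end{equation*}
Young's inequality then bounds the cross terms by $\tfrac{K_i}{2}(\cdot)^2(1,t)$ plus multiples of $e_k^2$ and $\hat e_k^2$.

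\textbf{Step 2 (multiplier and equivalence).} Take $\rho(t)=\int_0^1 x w_x\,(w_t-w_{xxt})\,dx$, or the variant $\int_0^1 x w_x w_t\,dx+\int_0^1 x w_{xx}w_{xt}\,dx$, which is the natural Rayleigh-beam multiplier. Using Poincaré-type bounds ($\|w_x\|\le\|w_{xx}\|$ from $w_x(0)=0$) and Cauchy--Schwarz, show $|\rho|\le 3E$. This gives the equivalence $(1-3\lambda)E\le L\le(1+3\lambda)E$, which explains both the requirement $\lambda<1/3$ and the factor $\frac{1+3\lambda}{1-3\lambda}$ in $\mathcal G$.

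Next differentiate $\rho$ and use the equation and the boundary conditions. The result should be
\begin{equation*}
\dot\rho\le -C\,E+(\text{boundary terms at }x=1).
\end{equation*}
The boundary terms are quadratic in $w_t(1),w_{xt}(1),w_{xx}(1),w_x(1),w_{xxx}(1)-w_{xtt}(1)$. We replace $w_{xx}(1)$ and $w_{xxx}(1)-w_{xtt}(1)$ by the feedback, which rewrites them in terms of $w_{xt}(1,t),w_t(1,t),e_k,\hat e_k$. To produce $C=\min\{2\alpha-1,\frac{3-2\alpha}{4}\}$, split the interior terms with a weight $\alpha\in(\frac12,1)$. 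The constant $\varepsilon$ collects the resulting Young weights $K_2(1+\alpha)$ and $K_1(1-\alpha)$.

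\textbf{Step 3 (closing).} Add the two estimates and require the coefficients of $w_t^2(1,t)$ and $w_{xt}^2(1,t)$ to be nonpositive. These sign conditions are exactly $\lambda<\lambda^*$ together with the three lower bounds on $\mu$, where $\mu$ is the Young weight chosen in front of $e_k^2,\hat e_k^2$. One then obtains
\begin{equation*}
\dot L\le-\lambda C\,E+2\mu\max\{e_k^2,\hat e_k^2\}.
\end{equation*}
Between triggering times, (\ref{tk}) gives $\max\{e_k^2,\hat e_k^2\}<\beta E(t)+\beta_0E(0)e^{-2\theta t}$. Combining this with $E\le L/(1-3\lambda)$ yields
\begin{equation*}
\dot L\le-\delta L+\hat\delta E(0)e^{-2\theta t}.
\end{equation*}
This inequality holds on every interval $[t_k,t_{k+1})$. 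Since $L$ is continuous across the $t_k$ and Zeno behaviour is excluded, it holds for a.e. $t>0$. Gronwall's inequality with $2\theta>\delta$ then gives
\begin{equation*}
L(t)\le e^{-\delta t}\Big(L(0)+\tfrac{\hat\delta E(0)}{2\theta-\delta}\Big).
\end{equation*}
Converting back with the equivalence of Step 2 gives (\ref{9998}).

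\textbf{Main obstacle.} The delicate step is Step 2's boundary bookkeeping. The terms $w_x(1)$ and $w_{xxx}(1)-w_{xtt}(1)$ arising from $\dot\rho$ must either be absorbed or rewritten through the controls, and the constants must match $\varepsilon$, $C$, $\lambda^*$ and the $\mu$ bounds exactly. I would first compute $\dot\rho$ carefully, keeping every boundary term symbolic, and only then choose the Young weights so that the stated conditions appear as nonnegativity of each coefficient.
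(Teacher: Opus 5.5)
\textbf{There is a genuine gap in Step 2.} Neither of your multipliers gives $\dot\rho\le -CE+(\text{boundary terms at }x=1)$. ``Splitting the interior terms with a weight $\alpha$'' cannot repair this, because the defect is a wrong sign, not a bad constant.

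For any multiplier $u=q(x)w_x$ inserted in the weak form, $\rho=\int_0^1(w_tu+w_{xt}u_x)\,dx$, the kinetic terms enter $\dot\rho$ as
\[
-\frac12\int_0^1q'w_t^2\,dx+\frac12\int_0^1q'w_{xt}^2\,dx .
\]
With $q=x$, the rotational term $+\frac12\|w_{xt}\|^2$ is anti-dissipative, and no Young weighting changes that.

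Your two concrete choices have further problems:
\begin{itemize}
\item The first, $\int_0^1xw_x(w_t-w_{xxt})\,dx$, is not bounded by $E$. It contains $w_{xxt}$, or after integrating by parts the point value $w_x(1,t)w_{xt}(1,t)$, so $|\rho|\le 3E$ fails.
\item The variant $\int_0^1xw_xw_t\,dx+\int_0^1xw_{xx}w_{xt}\,dx$ has a derivative containing $-\int_0^1w_xw_{xtt}\,dx$. By the weak form with test function $w$, this equals $\frac{d}{dt}\int_0^1ww_t\,dx$ plus energy and boundary terms. That time derivative cannot be estimated and must be moved into the functional; doing so gives exactly the paper's $\varrho$ with $\alpha=1$.
\end{itemize}

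The missing ingredient is the $-\alpha w$ part of the paper's multiplier $u=xw_x-\alpha w$:
\[
\varrho(t)=\int_0^1\Big(w_t(xw_x-\alpha w)+w_{xt}\big[(1-\alpha)w_x+xw_{xx}\big]\Big)dx .
\]
This is the $xw_x$ multiplier minus $\alpha$ times the equipartition functional $\int_0^1(ww_t+w_xw_{xt})\,dx$. It produces the interior coefficients $-(\frac12+\alpha)$, $-(\alpha-\frac12)$, $-(\frac32-\alpha)$ in front of $\|w_t\|^2$, $\|w_{xt}\|^2$, $\|w_{xx}\|^2$, which is exactly why $\alpha>\frac12$ is needed. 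It also produces the boundary terms $K_2\alpha w(1,t)w_t(1,t_k)$, $-K_1(1-\alpha)w_x(1,t)w_{xt}(1,t_k)$ and $K_1^2w_{xt}^2(1,t_k)$. Young's inequality together with $w^2(1,t),\,w_x^2(1,t)\le\|w_{xx}\|^2$ then fixes $\varepsilon$ and $C$.

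\textbf{Step 3 has two further problems.}
\begin{itemize}
\item Your claim that the sign conditions are ``exactly'' $\lambda<\lambda^*$ plus the three lower bounds on $\mu$ is unsupported. In the paper, $\mu$ multiplies the nonnegative slack $2\beta E+2\beta_0E(0)e^{-2\theta t}-e_k^2-\hat e_k^2$. The stated bounds are the negative-definiteness conditions for the $2\times 2$ matrices $\mathcal D_1,\mathcal D_2$ in the variables $(w_t(1,t),w_t(1,t_k))$ and $(w_{xt}(1,t),w_{xt}(1,t_k))$. Your error-coordinate splitting, with $\mu$ as a Young weight, gives different admissible ranges. Even after fixing the multiplier, you would obtain exponential stability for some parameters, but not the stated ones.
\item You pass from $\dot L\le-(\lambda C-2\mu\beta)E+\cdots$ to $\dot L\le-\delta L+\cdots$ using $E\le L/(1-3\lambda)$. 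That is the wrong direction for a negative coefficient. You need $E\ge L/(1+3\lambda)$, which gives the rate $(\lambda C-2\mu\beta)/(1+3\lambda)$. The paper's step (\ref{48}) makes the same move, so the $\delta$ written in the statement is not what this argument actually delivers.
\end{itemize}
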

\begin{rem}
From Theorem \ref{th3.1}, we can easily see that in order to achieve exponential stability, only the parameters need to be designed, and the range of the parameters is directly given, without satisfying the negative definite matrix condition \citep{Kang2021Event}.
Moreover, both the coefficient  $\mathcal{G}$ and the attenuation rate $\delta$ increase monotonically with respect to $\lambda\in (0,\lambda^*)$.
In order to achieve the desired decay rate $\delta$, the design order of these parameters is $\alpha,C, K_1,K_2, \varepsilon,\lambda,\mu, \theta,\beta,\beta_0,\hat{\delta}$   and $\mathcal{G}$.
\end{rem}
\vspace{10pt}
\section{Proof of main result}
\vspace{10pt}
In this section, the energy perturbation method is adopted to complete the proof for the exponential stability of the closed-loop system. For this, we need to define the
following Lyapunov functional
\begin{equation}\label{vt}
V(t):=E(t)+\lambda\varrho(t),
\end{equation}
where $ \lambda $ is a positive tuning parameter, the energy $E(t)$ is defined in (\ref{Et}) and
\begin{eqnarray}\label{pt}
\!\!\varrho(t)\!:=\!\!\int_0^1\!\!\Big(w_t(xw_x\!-\!\alpha w)\!+\!w_{xt}[(1\!-\!\alpha)w_x\!+\!xw_{xx}]\Big)dx
\end{eqnarray}
with $\frac{1}{2}<\alpha<1$.

Before proving the stability results, we need to first provide some auxiliary lemmas.
\begin{lemma}\label{3.1}
$V(t)$ defined in (\ref{vt}) and $E(t)$   given  in (\ref{Et}) satisfy the following inequality:
\begin{eqnarray}\label{3.53}
(1-3\lambda)E(t)\leq V(t)\leq(1+3\lambda)E(t),
\end{eqnarray}
for any $t >0$  with $0<\lambda<\frac{1}{3}.$
\end{lemma}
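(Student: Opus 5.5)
The plan is to show the pointwise-in-time bound $|\varrho(t)|\leq 3E(t)$. Then (\ref{3.53}) follows at once from $V=E+\lambda\varrho$, because $E-3\lambda E\leq V\leq E+3\lambda E$. The condition $0<\lambda<\frac13$ is needed only so that the lower constant $1-3\lambda$ is positive. No dynamics of the closed-loop system enter: this is a static estimate at each fixed $t$, using only the left boundary conditions (\ref{eqsystem4}).

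\textbf{Poincaré-type inequalities.} From $w(0,t)=w_x(0,t)=0$ we write $w_x(x,t)=\int_0^x w_{xx}(s,t)\,ds$. Cauchy--Schwarz then gives $|w_x(x,t)|\leq \|w_{xx}\|$ for every $x\in[0,1]$, so $\|w_x\|\leq\|w_{xx}\|$. The same argument applied to $w(x,t)=\int_0^x w_x(s,t)\,ds$ gives $\|w\|\leq\|w_x\|\leq\|w_{xx}\|$.

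\textbf{Bounding the four terms of $\varrho$.} We use $0\leq x\leq 1$, $\frac12<\alpha<1$ and Young's inequality $ab\leq\frac12(a^2+b^2)$:
\begin{eqnarray*}
\Big|\int_0^1 x w_t w_x dx\Big| &\leq& \tfrac12\big(\|w_t\|^2+\|w_{xx}\|^2\big),\\
\Big|\alpha\int_0^1 w_t w\, dx\Big| &\leq& \tfrac{\alpha}{2}\big(\|w_t\|^2+\|w_{xx}\|^2\big),\\
\Big|(1-\alpha)\int_0^1 w_{xt} w_x dx\Big| &\leq& \tfrac{1-\alpha}{2}\big(\|w_{xt}\|^2+\|w_{xx}\|^2\big),\\
\Big|\int_0^1 x w_{xt} w_{xx} dx\Big| &\leq& \tfrac12\big(\|w_{xt}\|^2+\|w_{xx}\|^2\big).
\end{eqnarray*}

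\textbf{Collecting coefficients.} Summing the four bounds, $\|w_t\|^2$ carries the coefficient $\frac{1+\alpha}{2}\leq 1$, $\|w_{xt}\|^2$ carries $\frac{2-\alpha}{2}\leq 1$, and $\|w_{xx}\|^2$ carries $\frac{1+\alpha+(1-\alpha)+1}{2}=\frac32$. Hence
\[
|\varrho(t)|\leq \tfrac32\big(\|w_t\|^2+\|w_{xt}\|^2+\|w_{xx}\|^2\big)=3E(t)
\]
by (\ref{Et}). The only delicate point is the $\|w_{xx}\|^2$ coefficient: it is exactly where the constant $3$ comes from, so each Young splitting must be the symmetric one with weight $\frac12$. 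The terms involving $w$ and $w_x$ must also be reduced to $\|w_{xx}\|$ through the Poincaré inequalities above, rather than absorbed elsewhere. This yields (\ref{3.53}).
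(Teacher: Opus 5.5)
Your proposal is correct and is essentially the paper's proof: four symmetric Young splittings, the Poincaré chain $\|w\|\le\|w_x\|\le\|w_{xx}\|$ from the clamped end, and the same coefficients $\frac{1+\alpha}{2}$, $\frac{3}{2}$, $\frac{2-\alpha}{2}$, which give $|\varrho(t)|\le 3E(t)$. Your remark that the splittings \emph{must} be symmetric is overstated (the $\|w_t\|^2$ and $\|w_{xt}\|^2$ coefficients have slack, so weighted splittings would even lower the constant), but this does not affect the argument.
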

{\bf Proof.} By applying Young's inequality,  $|\varrho(t)|$ satisfies
\begin{eqnarray}\label{4et}
\begin{aligned}
|\varrho(t)|&\leq\frac{1}{2}\int_0^1w^2_tdx+\frac{1}{2}\int_0^1w^2_xdx+\frac{\alpha}{2}\int_0^1w^2dx\\
&+\frac{\alpha}{2}\int_0^1\!\!w^2_tdx\!+\!\frac{1-\alpha}{2}\int_0^1\!\!w^2_{xt}dx\!+\!\frac{1}{2}\int_0^1w^2_{xx}dx\\
&+\frac{1}{2}\int_0^1w^2_{xt}dx+\frac{1-\alpha}{2}\int_0^1w^2_xdx.
\end{aligned}
\end{eqnarray}
In view of Poincar\'{e} inequality $||w||^2\leq||w_{x}\|^2\leq||w_{xx}\|^2$ and $\frac{1}{2}<\alpha<1$, we have
\begin{eqnarray}\label{1.15}
\begin{aligned}
|\varrho(t)|&\leq\frac{1\!+\!\alpha}{2}\!\!\int_0^1\!\!w^2_tdx+\frac{3}{2}\!\!\int_0^1\!\!w^2_{xx}dx\!\!+\!\!\frac{2\!-\!\alpha}{2}\int_0^1\!\!w^2_{xt}dx\\
&\leq3E(t).
\end{aligned}
\end{eqnarray}
Then substituting (\ref{1.15}) into (\ref{vt}) yields (\ref{3.53}).
\begin{lemma}
The derivatives of the energy function
$E(t)$ defined in (\ref{Et}) satisfies
\begin{equation}\label{738}
\frac{\mathrm{d}E(t)}{\mathrm{d}t} = -K_1w_{xt}(1,t)w_{xt}(1,t_k)- K_2w_t(1,t)w_t(1,t_k),
\end{equation}
for any $t \in [t_k, t_{k+1}]$ and $k \in \mathbb{N}$.
\end{lemma}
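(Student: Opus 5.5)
The plan is a direct computation of $\frac{d}{dt}E(t)$ on $[t_k,t_{k+1}]$, using the PDE (\ref{eqsystem1}), integration by parts, and the boundary conditions (\ref{eqsystem2})--(\ref{eqsystem4}). I assume the solution is regular enough on each interval (by the well-posedness discussion (I)--(II)) to justify differentiating under the integral and integrating by parts.

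First, differentiate (\ref{Et}) to get
\begin{equation*}
\frac{dE}{dt}=\int_0^1 w_tw_{tt}\,dx+\int_0^1 w_{xx}w_{xxt}\,dx+\int_0^1 w_{xt}w_{xtt}\,dx .
\end{equation*}
Next, handle the third integral by integrating by parts once in $x$:
\begin{equation*}
\int_0^1 w_{xt}w_{xtt}\,dx=w_t(1,t)w_{xtt}(1,t)-w_t(0,t)w_{xtt}(0,t)-\int_0^1 w_tw_{xxtt}\,dx .
\end{equation*}
The term at $x=0$ vanishes because $w(0,t)=0$ implies $w_t(0,t)=0$. The first and last integrals then combine to $\int_0^1 w_t(w_{tt}-w_{xxtt})\,dx=-\int_0^1 w_tw_{xxxx}\,dx$ by (\ref{eqsystem1}).

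For the elastic term, integrate by parts twice:
\begin{equation*}
\int_0^1 w_{xx}w_{xxt}\,dx=\big[w_{xx}w_{xt}\big]_0^1-\big[w_{xxx}w_t\big]_0^1+\int_0^1 w_{xxxx}w_t\,dx .
\end{equation*}
The contributions at $x=0$ vanish since $w_t(0,t)=w_{xt}(0,t)=0$. The last integral cancels the one from the previous step. Collecting the terms at $x=1$ gives
\begin{equation*}
\frac{dE}{dt}=w_{xx}(1,t)w_{xt}(1,t)+w_t(1,t)\big(w_{xtt}(1,t)-w_{xxx}(1,t)\big).
\end{equation*}
Substituting (\ref{eqsystem2}) and (\ref{eqsystem3}) then yields exactly (\ref{738}).

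The only delicate point is the regularity needed for the boundary traces $w_{xtt}(1,t)$ and $w_{xxx}(1,t)$. In particular, the boundary condition involves the combination $w_{xtt}-w_{xxx}$ rather than each term separately. I would perform the computation for classical (smooth) solutions and then extend to mild solutions by density and continuity of $E$, with $E$ absolutely continuous on each $[t_k,t_{k+1}]$. The algebra itself is routine; the key is keeping the grouping $w_t(1,t)(w_{xtt}-w_{xxx})(1,t)$ intact so that (\ref{eqsystem3}) applies directly.
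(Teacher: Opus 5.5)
Your computation is correct and is essentially the paper's argument. The paper multiplies (\ref{eqsystem1}) by a general test function $u$, integrates by parts to get the identity (\ref{bf}), and then takes $u=w_t$; this yields exactly the boundary expression $w_{xx}(1,t)w_{xt}(1,t)+w_t(1,t)\big(w_{xtt}(1,t)-w_{xxx}(1,t)\big)$ that you derive directly before substituting (\ref{eqsystem2})--(\ref{eqsystem3}), and the only difference is packaging (keeping $u$ general lets the paper reuse the identity with $u=xw_x-\alpha w$ in the next lemma).
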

{\bf Proof.}
First, we multiply both sides of (\ref{eqsystem1}) by $u$ and integrate over $x \in [0,1]$ by parts to obtain the following variational structure:
\begin{eqnarray}\label{19}
\begin{aligned}
&\int_0^1w_{tt}udx\!+\!\int_0^1w_{xx}u_{xx}dx+\int_0^1w_{xtt}u_xdx\\
&=u_x(1,t)w_{xx}(1,t)\!+\!u(1,t)(w_{xtt}(1,t)\!-\!w_{xxx}(1,t))\\
&-u_x(0,t)w_{xx}(0,t)\!-\!u(0,t)(w_{xtt}(0,t)\!-\!w_{xxx}(0,t)).
\end{aligned}
\end{eqnarray}
Applying the boundary conditions (\ref{eqsystem2})-(\ref{eqsystem4}) to (\ref{19}), we obtain
\begin{eqnarray}\label{bf}
\begin{aligned}
&\int_0^1w_{tt}udx\!+\!\int_0^1w_{xx}u_{xx}dx+\int_0^1w_{xtt}u_{x}dx\\
&=-K_1u_{x}(1,t)w_{xt}(1,t_k)- K_2u(1,t)w_t(1,t_k).
\end{aligned}
\end{eqnarray}

Replacing $u = w_t$ in (\ref{bf}) implies
\begin{eqnarray}
\begin{aligned}
&\int_0^1w_{tt}w_tdx\!+\!\int_0^1w_{xx}w_{xxt}dx+\int_0^1w_{xtt}w_{xt}dx\\
&=-K_1w_{xt}(1,t)w_{xt}(1,t_k)- K_2w_t(1,t)w_t(1,t_k),
\end{aligned}
\end{eqnarray}
which yields the derivative of $E(t)$.

\begin{lemma}\label{1.32}
The derivatives of
$\varrho(t)$ defined in (\ref{pt}) satisfies
\begin{equation}\label{777}
\begin{aligned}
\frac{\mathrm{d}\varrho(t)}{\mathrm{d}t}&=\frac{1}{2}w^2_t(1,t)-(\frac{1}{2}+\alpha)\int_0^1w^2_tdx
-(\alpha-\frac{1}{2})\int_0^1w^2_{xt}dx\\
&+\frac{1}{2}w^2_{xt}(1,t)-(\frac{3}{2}-\alpha)\int_0^1w^2_{xx}dx-\frac{1}{2}w^2_{xx}(1,t)\\
&-K_2w_x(1,t)w_t(1,t_k)+K_2\alpha w(1,t)w_t(1,t_k)\\
&-K_1(1-\alpha)w_x(1,t)w_{xt}(1,t_k)+K^2_1w^2_{xt}(1,t_k),
\end{aligned}
\end{equation}
for any $t \in [t_k, t_{k+1}]$ and $k \in \mathbb{N}$.
\end{lemma}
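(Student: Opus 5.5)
The plan is to differentiate $\varrho(t)$ term by term on $[t_k,t_{k+1}]$. The terms containing second time derivatives will be handled with the variational identity (\ref{bf}) for a well-chosen test function $u$. The remaining terms will be handled by direct integration by parts, using the clamped conditions (\ref{eqsystem4}). I will carry out the computation for sufficiently regular (classical) solutions and extend it to general solutions by density, as is standard.

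Differentiating (\ref{pt}) gives
\begin{equation*}
\begin{aligned}
\frac{\mathrm{d}\varrho}{\mathrm{d}t}&=\int_0^1 w_{tt}(xw_x-\alpha w)dx+\int_0^1 w_{xtt}\big[(1-\alpha)w_x+xw_{xx}\big]dx\\
&\quad+\int_0^1 w_t(xw_{xt}-\alpha w_t)dx+\int_0^1 w_{xt}\big[(1-\alpha)w_{xt}+xw_{xxt}\big]dx .
\end{aligned}
\end{equation*}
The key observation is that with $u:=xw_x-\alpha w$ we have $u_x=(1-\alpha)w_x+xw_{xx}$. Hence the first two integrals are exactly $\int_0^1 w_{tt}u\,dx+\int_0^1 w_{xtt}u_x\,dx$. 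By (\ref{bf}) they equal
\begin{equation*}
-\int_0^1 w_{xx}u_{xx}dx-K_1u_x(1,t)w_{xt}(1,t_k)-K_2u(1,t)w_t(1,t_k).
\end{equation*}
Here $u_{xx}=(2-\alpha)w_{xx}+xw_{xxx}$, $u(1,t)=w_x(1,t)-\alpha w(1,t)$ and $u_x(1,t)=(1-\alpha)w_x(1,t)+w_{xx}(1,t)$.

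Next I will insert the boundary condition (\ref{eqsystem2}), $w_{xx}(1,t)=-K_1w_{xt}(1,t_k)$, into $u_x(1,t)$. This produces the terms $-K_1(1-\alpha)w_x(1,t)w_{xt}(1,t_k)+K_1^2w_{xt}^2(1,t_k)$. The term $u(1,t)$ produces $-K_2w_x(1,t)w_t(1,t_k)+K_2\alpha w(1,t)w_t(1,t_k)$. For the bulk term, integration by parts gives
\begin{equation*}
\int_0^1 xw_{xx}w_{xxx}dx=\tfrac12 w_{xx}^2(1,t)-\tfrac12\int_0^1 w_{xx}^2dx,
\end{equation*}
so that
\begin{equation*}
-\int_0^1 w_{xx}u_{xx}dx=-(\tfrac32-\alpha)\int_0^1 w_{xx}^2dx-\tfrac12 w_{xx}^2(1,t).
\end{equation*}

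For the last two integrals, I will use
\begin{equation*}
\int_0^1 xw_tw_{xt}dx=\tfrac12w_t^2(1,t)-\tfrac12\int_0^1w_t^2dx,\qquad \int_0^1 xw_{xt}w_{xxt}dx=\tfrac12w_{xt}^2(1,t)-\tfrac12\int_0^1w_{xt}^2dx.
\end{equation*}
The contributions at $x=0$ vanish because of the factor $x$. Combining these with $-\alpha\int_0^1 w_t^2dx$ and $(1-\alpha)\int_0^1 w_{xt}^2dx$ gives the coefficients $-(\tfrac12+\alpha)$ and $-(\alpha-\tfrac12)$. Summing all pieces yields (\ref{777}).

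The computation is routine. The only delicate point is recognizing that the multiplier $u=xw_x-\alpha w$ has $u_x$ equal to the second multiplier in $\varrho$. This is what allows (\ref{bf}) to absorb the $w_{tt}$ and $w_{xtt}$ terms simultaneously, without computing the boundary term $w_{xtt}(1,t)-w_{xxx}(1,t)$ separately. Beyond that, care is needed only with signs and with keeping $w_{xx}^2(1,t)$ in its original form while substituting (\ref{eqsystem2}) into the $u_x(1,t)$ term.
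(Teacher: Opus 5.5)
Your proposal is correct and follows essentially the paper's route. The paper also substitutes $u=xw_x-\alpha w$ into (\ref{bf}) and uses the same integrations by parts. The only difference is bookkeeping: the paper rewrites the $w_{tt}$ and $w_{xtt}$ integrals ($\mathcal{N}_1,\mathcal{N}_2$) as total time derivatives plus remainders, whereas you differentiate $\varrho$ first; all your coefficients and boundary terms check out.
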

{\bf Proof.}
Replace $u$ with $xw_x-\alpha w (\frac{1}{2}<\alpha<1)$ in (\ref{bf}) to produce
\begin{eqnarray}\label{N1}
\begin{aligned}
&\underbrace{\int_0^1w_{tt}(xw_x-\alpha w)dx}_{\mathcal{N}_1}+\underbrace{\int_0^1w_{xtt}(xw_x-\alpha w)_xdx}_{\mathcal{N}_2}\\
&+\underbrace{\int_0^1w_{xx}(xw_x-\alpha w)_{xx}dx}_{\mathcal{N}_3}\\
=&\underbrace{-K_1w_{xt}(1,t_k)[(1-\alpha)w_x(1,t)
+w_{xx}(1,t)]}_{\mathcal{N}_4}\\
&\underbrace{-K_2w_t(1,t_k)[w_x(1,t)-\alpha w(1,t)]}_{\mathcal{N}_5}.
\end{aligned}
\end{eqnarray}
By performing integration by parts, we derive the
following expression:
\begin{eqnarray}\label{2.5}
&\int_0^1xw_{tt}w_xdx=\int_0^1[xw_tw_x]_tdx-\int_0^1xw_{xt}w_tdx\\ \nonumber
&=\int_0^1[xw_tw_x]_tdx-\frac{1}{2}w^2_t(1,t)+\frac{1}{2}\int_0^1w^2_tdx
\end{eqnarray}
and
\begin{eqnarray}\label{2.6}
&-\alpha\int_0^1w_{tt}wdx=-\alpha\int_0^1[ww_t]_tdx+\alpha\int_0^1w^2_tdx.
\end{eqnarray}
Adding equations (\ref{2.5}) and (\ref{2.6}), we can obtain
\begin{eqnarray}\label{2.4}
\begin{aligned}
&\mathcal{N}_1=\int_0^1[w_t(xw_x-\alpha w)]_tdx\\
&~~~~~~~+(\frac{1}{2}+\alpha)\int_0^1w_t^2dx-\frac{1}{2}w_t^2(1,t).
\end{aligned}
\end{eqnarray}
By computing the derivative of
$xw_x-\alpha w$ with respect to $x$ in $\mathcal{N}_2$, we can simplify
\begin{eqnarray}\label{n2}
\mathcal{N}_2=\int_0^1x w_{xx}w_{xtt}dx+\int_0^1(1-\alpha)w_xw_{xtt}dx.
\end{eqnarray}
Through integration by parts to (\ref{n2}), we obtain
\begin{eqnarray}\label{n2.1}
\begin{aligned}
&\int_0^1x w_{xx}w_{xtt}dx\\
&=\int_0^1[xw_{xt}w_{xx}]_tdx+\frac{1}{2}\int_0^1w^2_{xt}dx-\frac{1}{2}w^2_{xt}(1,t)
\end{aligned}
\end{eqnarray}
and
\begin{eqnarray}\label{n2.2}
\begin{aligned}
&\int_0^1(1-\alpha)w_xw_{xtt}dx\\
&=(1-\alpha)\int_0^1[w_xw_{xt}]_tdx\!-\!(1-\alpha)\int_0^1w^2_{xt}dx.
\end{aligned}
\end{eqnarray}
Summing (\ref{n2.1}) and (\ref{n2.2}) gives
\begin{equation}\label{2.61}
\begin{aligned}
\mathcal{N}_2&=\int_0^1[(1-\alpha)w_x w_{xt}+xw_{xt}w_{xx}]_tdx\\
&~~~+(\alpha-\frac{1}{2})\int_0^1w^2_{xt}dx-\frac{1}{2}w^2_{xt}(1,t).
\end{aligned}
\end{equation}

By taking the second-order derivative of $xw_x-\alpha w$  with respect to $x$ in $\mathcal{N}_3$, we are able to reorganize $\mathcal{N}_3$ as
\begin{eqnarray}
\mathcal{N}_3=\int_0^1(2-\alpha)w^2_{xx}dx+\int_0^1xw_{xx}w_{xxx}dx.
\end{eqnarray}
Subsequently, integration by parts results in
\begin{eqnarray}\label{2.42}
\begin{aligned}
\mathcal{N}_3=\int_0^1(\frac{3}{2}-\alpha)w^2_{xx}dx+\frac{1}{2}w^2_{xx}(1,t).
\end{aligned}
\end{eqnarray}
Applying boundary conditions (\ref{eqsystem2}), one can effortlessly derive
\begin{eqnarray}\label{2.43}
\mathcal{N}_4=-K_1(1-\alpha)w_x(1,t)w_{xt}(1,t_k)+K^2_1w^2_{xt}(1,t_k)
\end{eqnarray}
and
\begin{eqnarray}\label{2.44}
\mathcal{N}_5=-K_2w_x(1,t)w_t(1,t_k)+K_2\alpha w(1,t)w_t(1,t_k).
\end{eqnarray}

Ultimately, substituting the revised forms of $\mathcal{N}_1$ through $\mathcal{N}_5$ into (\ref{N1}) gives
\begin{eqnarray}
\begin{aligned}
&\!\int_0^1\!\![(1\!-\!\alpha)w_x w_{xt}\!+\!xw_{xt}w_{xx}]_tdx+\!\!\int_0^1\!\![w_t(xw_x-\alpha w)]_tdx\\
&=\frac{1}{2}w^2_t(1,t)-(\frac{1}{2}+\alpha)\int_0^1w^2_tdx
-(\alpha-\frac{1}{2})\int_0^1w^2_{xt}dx\\
&+\frac{1}{2}w^2_{xt}(1,t)-(\frac{3}{2}-\alpha)\int_0^1w^2_{xx}dx-\frac{1}{2}w^2_{xx}(1,t)\\
&-K_2w_x(1,t)w_t(1,t_k)+K_2\alpha w(1,t)w_t(1,t_k)\\
&-K_1(1-\alpha)w_x(1,t)w_{xt}(1,t_k)+K^2_1w^2_{xt}(1,t_k),
\end{aligned}
\end{eqnarray}
which leads to the desired result (\ref{777}).

From the above preliminary work, we will prove our stability result.

{\bf Proof of Theorem \ref{th3.1}.}
Since $e_{k}$ and $\hat{e}_{k}$ in (\ref{tk}) bounded on each time sub-interval $[t_k, t_{k+1}]$ and $k \in \mathbb{N}$, we have
\begin{equation}\label{166}
e_k^2+\hat{e}_k^2\leq 2\beta E(t) + 2\beta_0 E(0) e^{-2\theta t}.
\end{equation}

For $t \in [t_k, t_{k+1}]$ and $k \in \mathbb{N}$, differentiating $V(t)$ in (\ref{vt}) and using (\ref{738}), (\ref{777}) and (\ref{166}), we obtain
\begin{eqnarray}\label{lyp}
\begin{aligned}
\frac{\mathrm{d}V(t)}{\mathrm{d}t}&\leq\mu(2\beta E(t)+2\beta_0E(0)e^{-2\theta t}-e^2_{k}-\hat{e}^2_{k})\\
&~~~+\frac{\mathrm{d}E(t)}{\mathrm{d}t}+\lambda\frac{\mathrm{d}\varrho(t)}{\mathrm{d}t}\\
&\leq\underbrace{-\lambda(\frac{1}{2}
+\alpha)\int_0^1w^2_{t}dx}_{\mathcal{M}_1}
\underbrace{-\lambda(\alpha-\frac{1}{2})\int_0^1w^2_{xt}dx}_{\mathcal{M}_2}\\
&~~~\underbrace{-\lambda(\frac{3}{2}-\alpha)\int_0^1w^2_{xx}dx}_{\mathcal{M}_3}
+K^2_1w^2_{xt}(1,t_k)\\
&~~~\underbrace{-K_2w_x(1,t)w_t(1,t_k)+K_2\alpha w(1,t)w_t(1,t_k)}_{\mathcal{M}_4}\\
&~~~\underbrace{-K_1(1-\alpha)w_x(1,t)w_{xt}(1,t_k)}_{\mathcal{M}_5}-\mu w^2_t(1,t)\\
&~~~-\mu w^2_{xt}(1,t_k)+(2\mu\!-\!K_1)w_{xt}(1,t)w_{xt}(1,t_k)\\
&~~~+(2\mu\!-\!K_2)w_t(1,t)w_t(1,t_k)+\frac{\lambda}{2}w^2_t(1,t)\\
&~~~-\mu w^2_t(1,t_k)-\mu w^2_{xt}(1,t)+\frac{\lambda}{2}w^2_{xt}(1,t),
\end{aligned}
\end{eqnarray}
where $\mu>0$ is a parameter.

By virtue of Young's inequality ($ab\leq\frac{1}{2\varepsilon}a^2+\frac{\varepsilon}{2}b^2,\forall\varepsilon>0)$ for $\mathcal{M}_4$ and $\mathcal{M}_5$, it holds
\begin{eqnarray}\label{m41}
\begin{aligned}
\mathcal{M}_4&\leq\frac{K^2_2}{2\varepsilon}w^2_x(1,t)+\frac{K^2_2\alpha^2}{2\varepsilon}w^2(1,t)
+\varepsilon w^2_t(1,t_k)
\end{aligned}
\end{eqnarray}
and
\begin{eqnarray}\label{m51}
\mathcal{M}_5\leq\frac{K^2_1(1-\alpha)^2}{2\varepsilon}w^2_x(1,t)+\frac{\varepsilon}{2}w^2_{xt}(1,t_k).
\end{eqnarray}
In light of the Poincar\'{e} inequality, we have $w^2(1,t)\leq \|w_x\|^2\leq\|w_{xx}\|^2$ and $w^2_x(1,t)\leq\|w_{xx}\|^2$. Then it follows from (\ref{m41}) and (\ref{m51}) that
\begin{eqnarray}\label{m4}
\mathcal{M}_4\leq\frac{K^2_2(1+\alpha^2)}{2\varepsilon}\int_0^1w^2_{xx}dx+\varepsilon w^2_t(1,t_k)
\end{eqnarray}
and
\begin{eqnarray}\label{m5}
\mathcal{M}_5\leq\frac{K^2_1(1-\alpha)^2}{2\varepsilon}\int_0^1w^2_{xx}dx+\frac{\varepsilon}{2}w^2_{xt}(1,t_k).
\end{eqnarray}
Calculate the sum of $\mathcal{M}_1$ to $\mathcal{M}_5$ and invoke (\ref{m4}) and (\ref{m5}) to obtain
\begin{eqnarray}\label{m1m5}
\begin{aligned}
&\mathcal{M}_1+\cdots+\mathcal{M}_5\\
&\leq-\lambda\bigg(\frac{1}{2}+\alpha\bigg)\int_0^1w^2_tdx-
\lambda(\alpha-\frac{1}{2})\int_0^1w^2_{xt}dx\\
&~~~-\lambda\bigg(\frac{3}{2}\!-\!\alpha\!-\!\frac{K^2_2(1\!+\!\alpha^2)+K^2_1(1\!-\!\alpha)^2}{2\varepsilon}\bigg)
\int_0^1w^2_{xx}dx\\
&\leq-\lambda CE(t),
\end{aligned}
\end{eqnarray}
where $\lambda>0$ and $C=\min\{2\alpha-1,\frac{3-2\alpha}{4}\}$. Since $\varepsilon>0$ is an arbitrary Young's parameter, we let $\varepsilon=\frac{2K^2_2(1+\alpha^2)+2K^2_1(1\!-\!\alpha)^2}{3-2\alpha}$  guarantee  ``$\frac{3}{2}-\alpha-\frac{K^2_2(1+\alpha^2)+K^2_1(1-\alpha)^2}{2\varepsilon}$ $=\frac{3-2\alpha}{4}$ '', which implies that (\ref{m1m5}) holds.

Substituting (\ref{m1m5}) into (\ref{lyp}) and combining like terms, we can show that
\begin{eqnarray}\label{6794}
\begin{aligned}
\frac{\mathrm{d}V(t)}{\mathrm{d}t}&\!\leq\!(\frac{\lambda}{2}-\mu)w^2_t(1,t)+(2\mu-K_2)w_t(1,t)w_t(1,t_k)\\
&~~~+(\lambda\varepsilon-\mu)w^2_t(1,t_k)+(2\mu\beta-\lambda C)E(t)\\
&~~~+(2\mu-K_1)w_{xt}(1,t)w_{xt}(1,t_k)\\
&~~~+(\frac{\varepsilon\lambda}{2}+\lambda K^2_1-\mu)w^2_{xt}(1,t_k)\\
&~~~+(\frac{\lambda}{2}-\mu)w^2_{xt}(1,t)+2\mu\beta_0E(0)e^{-2\theta t}\\
&\leq
\Upsilon^\top\mathcal{D}_1\Upsilon+\Gamma^\top\mathcal{D}_2\Gamma
+(2\mu\beta-\lambda C)E(t)\\
&~~~+2\mu\beta_0E(0)e^{-2\theta t},
\end{aligned}
\end{eqnarray}

where $\Upsilon\!\!=\!\!(w_t(1,t),w_t(1,t_k))^\top, \Gamma\!\!=\!\!(w_{xt}(1,t),w_{xt}(1,t_k))^\top$,
\begin{eqnarray}
\mathcal{D}_1\!=\!
\begin{pmatrix}
\frac{\lambda}{2} - \mu & \mu - \frac{K_2}{2} \\
\mu - \frac{K_2}{2} & \lambda\varepsilon - \mu
\end{pmatrix}
\end{eqnarray}
and
\begin{eqnarray}
\mathcal{D}_2\!=\!
\begin{pmatrix}
\frac{\lambda}{2} - \mu & \mu - \frac{K_1}{2} \\
\mu - \frac{K_1}{2} & \frac{\lambda\varepsilon}{2}+\lambda K^2_1 - \mu
\end{pmatrix}.
\end{eqnarray}
Given $\mu\!>\!\max\{ \frac{K^2_2-2\varepsilon\lambda^2}{4K_2-2\lambda-4\lambda \varepsilon}, \frac{K^2_1-\lambda^2\varepsilon-2\lambda^2K^2_1}{4K_1-2\lambda-2\lambda\varepsilon-4\lambda K^2_1},\frac{\lambda}{2}\}$ with $0\!\!<\!\!\lambda\!\!<\!\! \min\{\frac{2K_2}{2\varepsilon+1},\frac{K_1}{\sqrt{2K^2_1+\varepsilon}}, \frac{1}{3}\}$ such that $\mathcal{D}_1$ and $\mathcal{D}_2$ are negative definite matrices. Then the inequality (\ref{6794}) becomes
\begin{equation}\label{35}
\begin{aligned}
\frac{\mathrm{d}V(t)}{\mathrm{d}t}\leq(2\mu\beta-\lambda C)E(t)+2\mu\beta_0E(0)e^{-2\theta t},
\end{aligned}
\end{equation}
with $0<\lambda<\frac{1}{3}$.

Substituting (\ref{3.53}) into  (\ref{35}) leads to
\begin{eqnarray}\label{48}
\frac{\mathrm{d}V(t)}{\mathrm{d}t}\leq\frac{(2\mu\beta-\lambda C)}{1-3\lambda}V(t)+\frac{2\mu\beta_0}{1-3\lambda}V(0)e^{-2\theta t},
\end{eqnarray}
for all $t>0$.

Let  $\delta=\frac{\lambda C-2\mu\beta}{1-3\lambda}<2\theta, \hat{\delta}=\frac{2\mu\beta_0}{1-3\lambda}$ with $0\leq \beta < \frac{\lambda C}{2\mu}$, then for any $t>0$, applying Gronwall inequality to (\ref{48}), one has
\begin{eqnarray}\label{88}
\begin{aligned}
V(t) &\leq e^{-\delta t} V(0) + \hat{\delta} e^{-\delta t} V(0) \int_0^t e^{(-2\theta + \delta)s} \, ds \\
&\leq e^{-\delta t} V(0) + \frac{\hat{\delta}}{2\theta - \delta} V(0) (e^{-\delta t} - e^{-2\theta t}) \\
&\leq \left( 1 + \frac{\hat{\delta}}{2\theta - \delta} \right) e^{-\delta t} V(0) - \frac{\hat{\delta}}{2\theta - \delta} e^{-2\theta t} V(0).
\end{aligned}
\end{eqnarray}
From (\ref{3.53}), it follows that
\begin{equation}\label{estimate}
E(t) \leq \mathcal{G} e^{-\delta t} E(0), \quad \forall t > 0,
\end{equation}
where $\mathcal{G} = \frac{1 + 3\lambda}{1 - 3\lambda} \left( 1 + \frac{\hat{\delta}}{2\theta - \delta} \right)$, completing the proof of Theorem \ref{th3.1}. \hfill$\Box$

\vspace{10pt}
\section{Numerical Simulations}
\vspace{10pt}
In this section, a simulation example of the closed-loop system (\ref {eqsystem1})-(\ref {eqsystem5}) is presented to verify the effectiveness of the proposed control law (\ref {2022}), and the finite difference method is adopted for the simulation.

Given parameters $K_1=0.2, K_2=0.1$, the total time $T=2$ and the initial state $\iota_0(x)=x-\mathrm{sin}x,\varsigma_0(x)=1-\mathrm{cos}x$, we choose $\beta=0.01, \beta_0=0.005,\theta=0.2$ for the event-triggering mechanism (\ref{tk}) in the simulation.


From Fig. 1, it is easy to find that the closed-loop system (\ref{eqsystem1})-(\ref{eqsystem5}) under event-triggering mechanism (\ref{tk}) can be rapidly stabilized, which indicates that the transverse
vibration of Rayleigh beam has been effectively suppressed.
Moreover,
The energy function $E(t)$ of the closed-loop system (\ref{eqsystem1})-(\ref{eqsystem5}) under event-triggering mechanism (\ref{tk}) decays to zero, as shown in Fig. 2. Additionally, the release time and release interval are clearly described in Fig. 3 when implementing event triggered control.
 Fig. 4 and Fig. 5  depict the
evolution of the magnitude of the controller $\mathbb{U}_1(t)$ and $\mathbb{U}_2(t)$  respectively
in the continuous-in-time and the event-triggering
frameworks. From the perspective of conserving communication resources, the energy-saving performance of event-triggered control is superior to that of continuous-time control.
\vspace{10pt}
\section{Conclusion}
\vspace{10pt}
This paper formulates two event-triggered control laws to stabilize Rayleigh beam systems. A novel event-triggered control mechanism is proposed and the exponential stability of the closed-loop system is guaranteed via the integral multiplier technique and the energy perturbation method, thereby achieving the expected decay rate. In the future, we will consider designing event triggered control for nonlinear beam systems and coupled systems at the boundary to solve the  stabilization problem.


 \begin{figure}
 \centering
 \includegraphics[width=1\columnwidth]{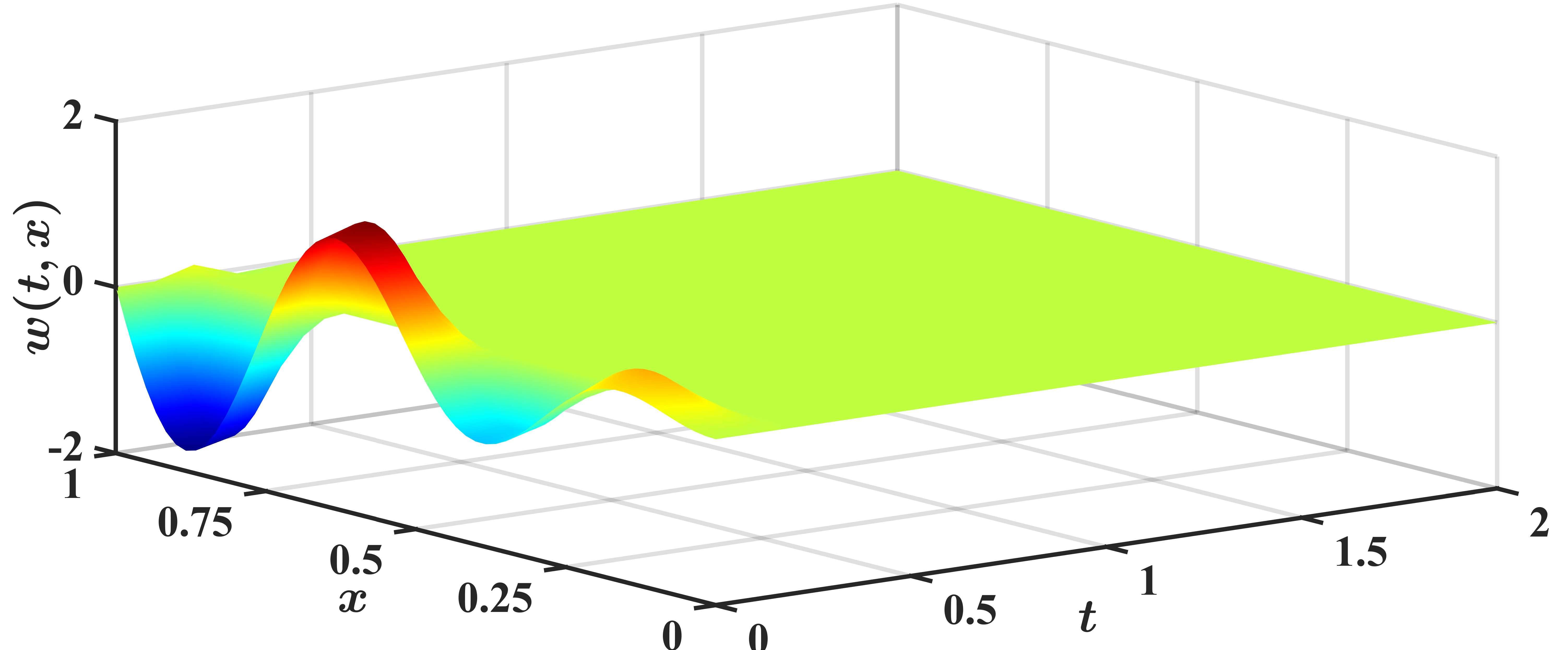}
\caption{Transverse displacements of the closed-loop system
with the event-triggered control.}
\label{figure1}
\end{figure}

\begin{figure}
 \centering
 \includegraphics[width=1\columnwidth]{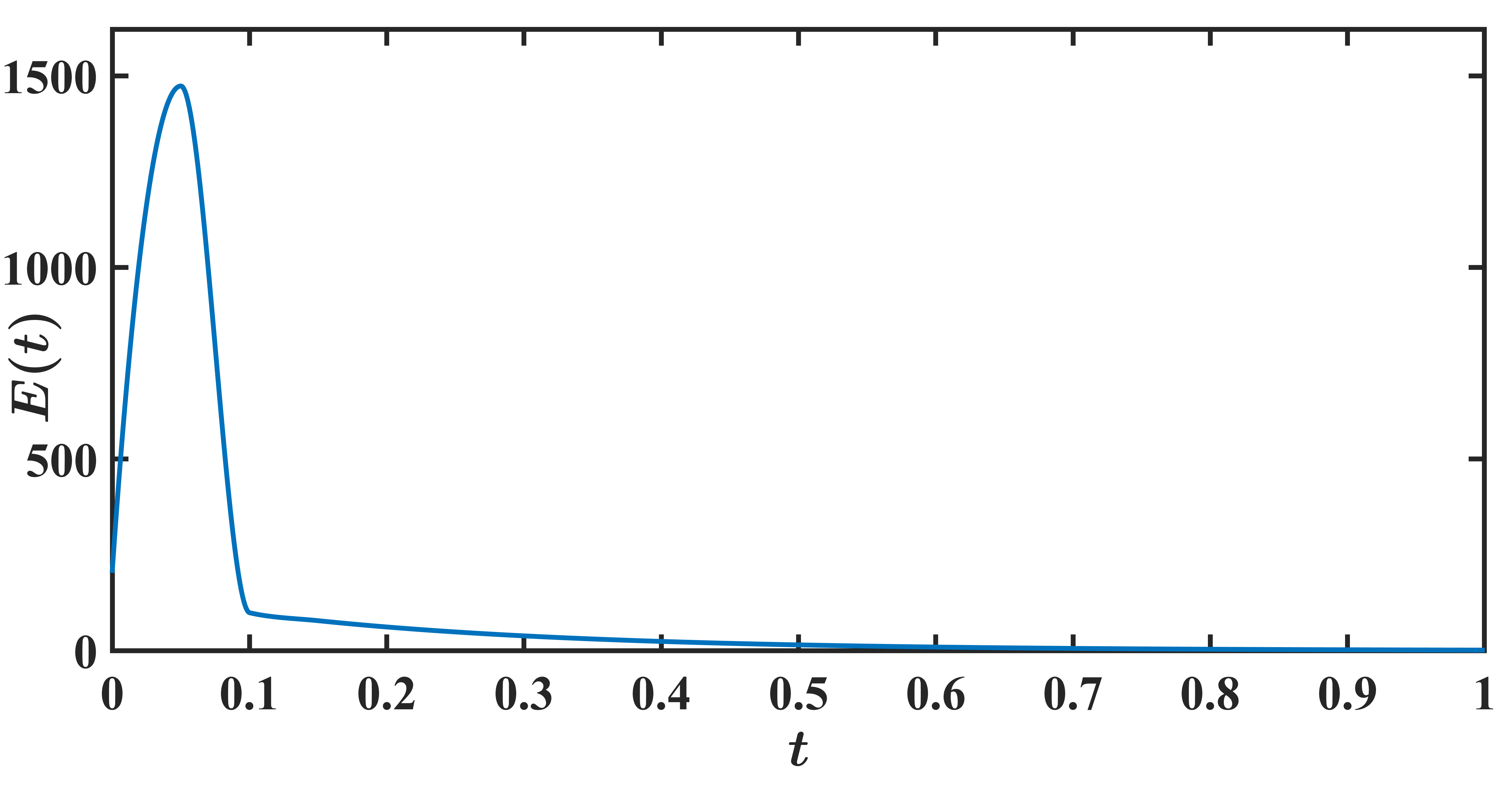}
\caption{Time evolution of the energy $E(t)$ of the closed-loop system
under the event-triggered control.}
\label{figure2}
\end{figure}

\begin{figure}
 \centering
\includegraphics[width=1\columnwidth]{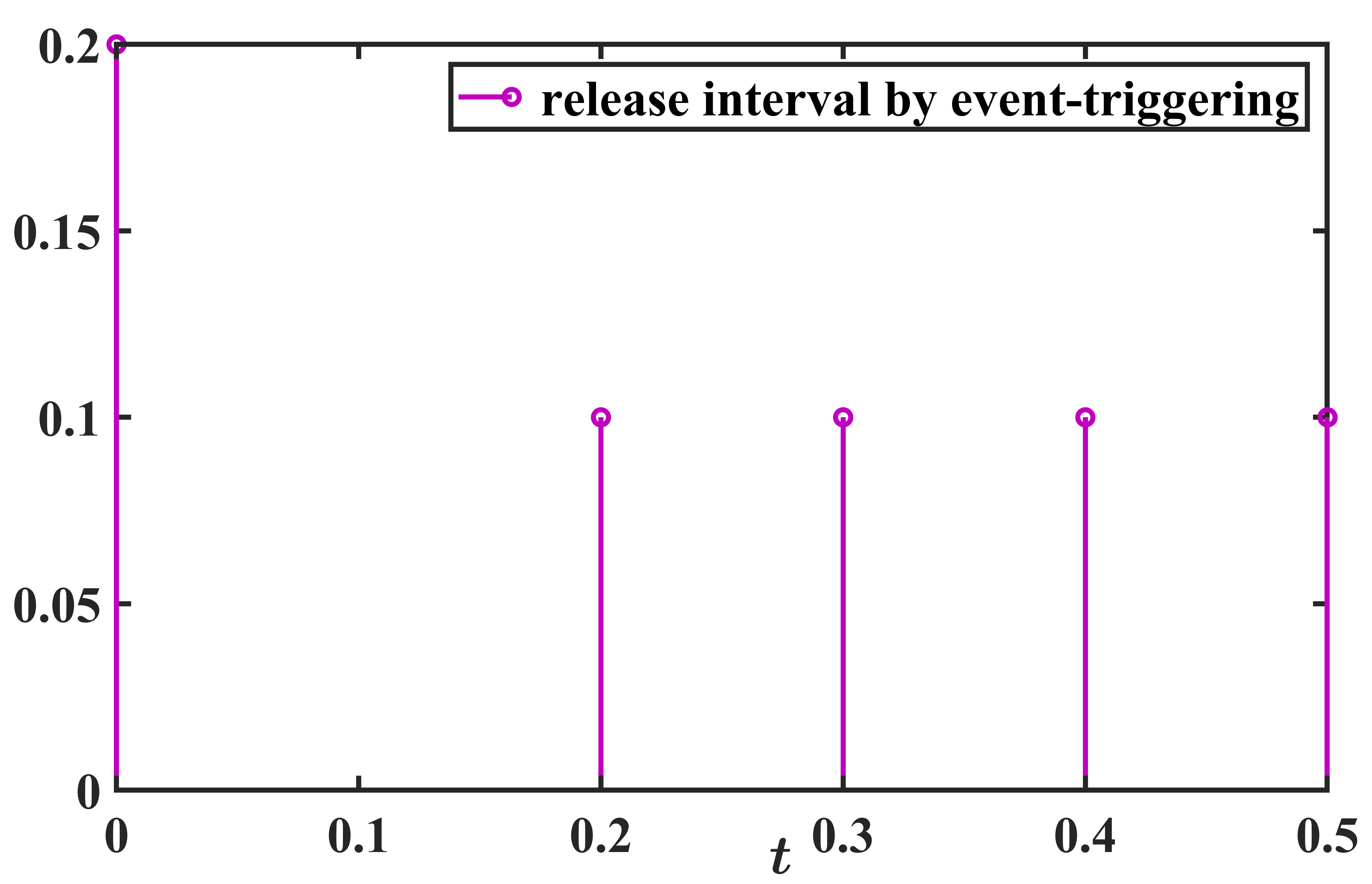}
\caption{Release instants and release interval by event-triggering.}
\label{figure3}
\end{figure}

\begin{figure}
 \centering
\includegraphics[width=1\columnwidth]{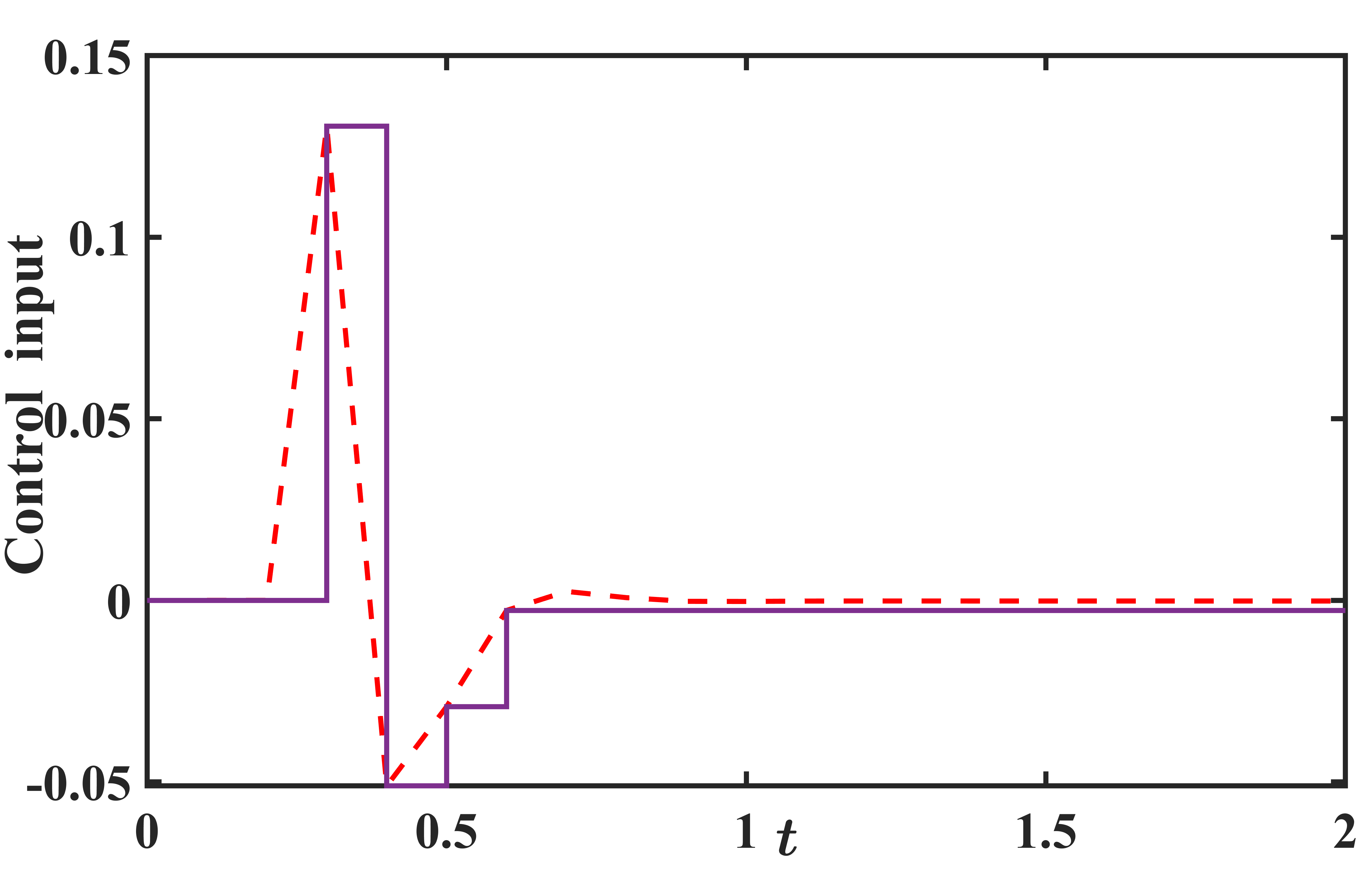}
\caption{Evolution of  $\mathbb{U}_1(t)$ for the continuous-time controller in red dashed line and the event-triggered controller in purple line.}
\label{figure4}
\end{figure}
 \begin{figure}
 \centering
\includegraphics[width=1\columnwidth]{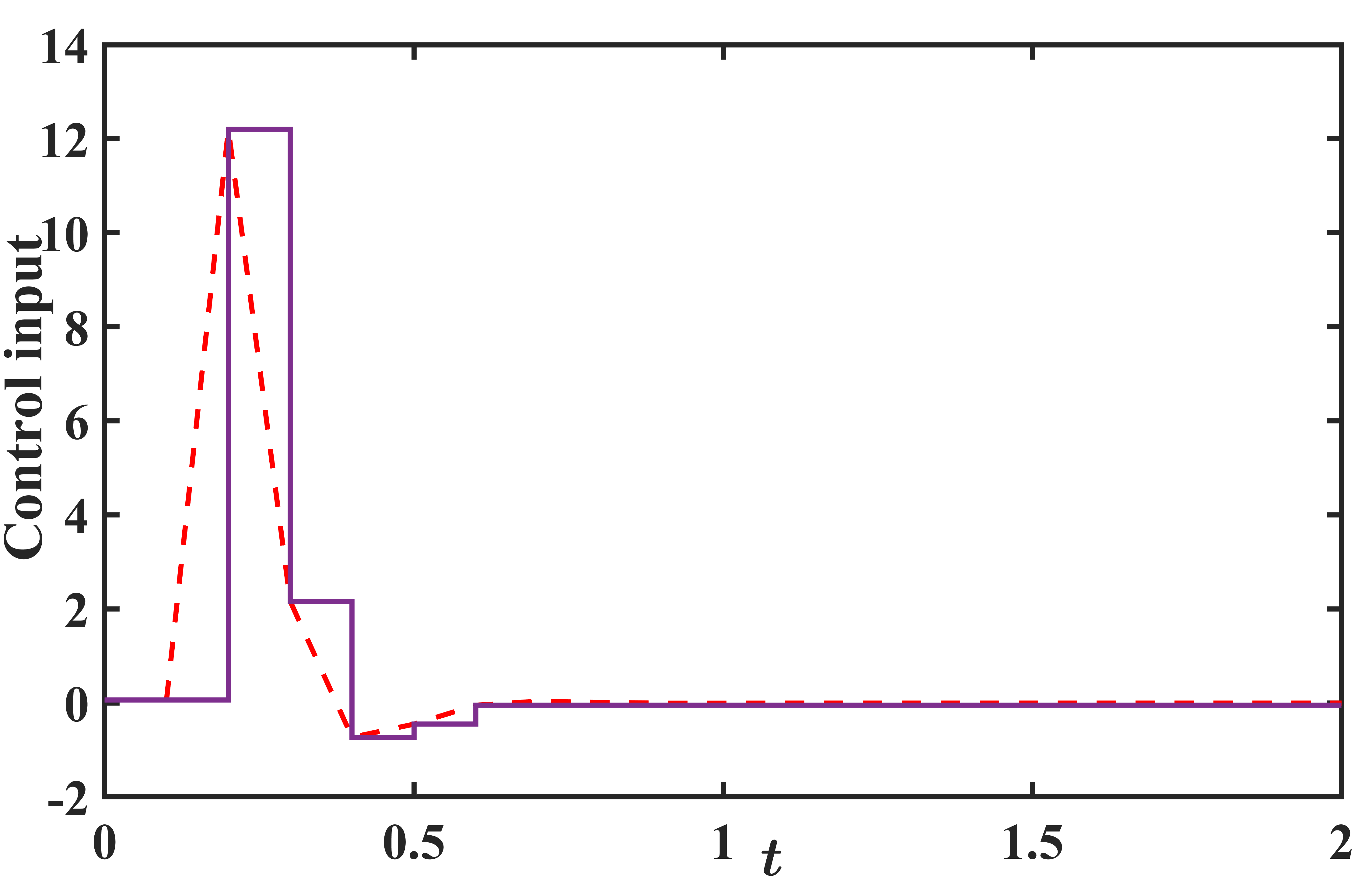}
\caption{Evolution of  $\mathbb{U}_2(t)$ for the continuous-time controller in red dashed line and the event-triggered controller in purple line.}
\label{figure5}
\end{figure}

  \vspace{10pt}                                                   
  \bibliography{ifacconf}   
\end{document}